\def\Dbar{\leavevmode\lower.6ex\hbox to 0pt{\hskip-.03ex\accent"16\hss}D}
\def\underset#1#2{{\mathrel{\mathop {{}_{} {#2}}\limits_{{#1}_{}}}}}
\def\upplim_#1{\underset{#1}{\overline\lim}\;}
\def\lowlim_#1{\underset{#1}{\underline\lim}\;}
\newtheorem{claim}[equation]{{\it Claim}\rm }
\newtheorem{lemma}[equation]{Lemma}
\newtheorem{theorem}[equation]{Theorem}
\newcommand{\C}{{\mathbb{C}}}
\renewcommand{\P}{{\mathbb{P}}}
\newcommand{\R}{{\mathbf{R}}}
\newcommand{\rank}{\mathrm{rank}}
\numberwithin{equation}{section}
\title[Second main theorems with weighted counting functions...]{Second main theorems with weighted counting functions and its applications} 
\date { }
\author[Pham Duc Thoan, Nguyen Hai Nam and Nguyen Van An]{Pham Duc Thoan$^{1}$, Nguyen Hai Nam$^1$ and Nguyen Van An$^2$}
\address{$^{1}$ Department of Mathematics, National University of Civil Engineering\\ 55 Giai Phong str., Hanoi, Vietnam}
\email{thoanpd@nuce.edu.vn, namnh211@gmail.com}
\address{$^2$ Division of Mathematics, Banking Academy,\\
12-Chua Boc, Dong Da, Hanoi, Vietnam}
\email{an0883@gmail.com}
\begin{document}

\begin{abstract} 
The purpose of this article has two fold. The first is to generalize some recent second main theorems for the mappings and moving hyperplanes of $\P^n(\C)$ to the case where the counting functions are truncated multiplicity (by level $n$) and have different weights. As its application, the second purpose of this article is to generalize and improve some algebraic dependence theorems for meromorphic mappings having the same inverse images of some moving hyperplanes to the case  where the moving hyperplanes involve the assumption with different roles.
\end{abstract}

\def\thefootnote{\empty}
\footnotetext{
2010 Mathematics Subject Classification:
Primary 32H30, 32A22; Secondary 30D35.\\
\hskip8pt Key words and phrases: Nevanlinna, second main theorem, meromorphic mapping, moving hyperplane.\\
The research is funded by National University of Civil Engineering (NUCE) under grant number 203-2018/KHXD-T\Dbar.}

\maketitle

\section{Introduction}
The theory on second main theorem for meromorphic mappings into projective spaces with moving hyperplanes was started studied by W. Stoll, M. Ru \cite{RS} and M. Shirosaki in 1990's \cite{S1,S2}. In that time, almost all given second main therems do not have the truncation level for the counting functions of the inverse image of moving hyperplanes. In some recent years, this theory have been studied very intesively with many results established. To state some of them, we recall the following notation.

Let $a_1,\dots,a_q$ $(q \geq n+1)$ be $q$ meromorphic mappings of $\C^m$ into the dual space $\P^n(\C)^*$ with reduced representations $a_i = (a_{i0}: \dots : a_{in})\ (1\le i \le q).$ We say that $a_1,\dots,a_q$ are located in general position if $\det (a_{i_kl}) \not \equiv 0$ for any $1\le i_0<i_1<\cdots <i_n\le q.$ Let $\mathcal {M}_m$ be the field of all meromorphic functions on $\C^m$. Denote by $\mathcal {R}(\{a_i\}_{i=1}^q) \subset \mathcal {M}_m$ the smallest subfield which contains $\C$ and all $ \frac {a_{ik}}{a_{il}}\text { with } a_{il}\not\equiv 0.$ 

For the case of nondegenerate meromorphic mappings of $\C^m$ into $\P^n(\C)$ intersecting moving hyperplanes, the first second main theorem with truncated (to level $n$) counting functions was given by Ru \cite{MR} for the case $m=1$ and reproved for general case by Thai-Quang \cite{TQ05}. For the case of degenerate meromorphic mappings, in \cite{RW}, Ru and Wang gave a second main theorem for moving hyperplanes with counting function truncated to level $n$. And then, the result of Ru-Wang was improved by Thai-Quang \cite{TQ08} and Quang-An \cite{QA}. In 2016, S. D. Quang \cite{Q14} improved and extended these results to the following.
 
\vskip0.2cm
\noindent{\bf Theorem A }\cite[Theorem 1.1]{Q14} {\it Let $f: \mathbb C^m\to\mathbb P^n(\mathbb C)$ be a meromorphic mapping. Let $\{a_j\}_{j=1}^q$\ $(q\geq 2n-k+2)$ be meromorphic mappings of $\mathbb C^m$ into $\mathbb P^n(\mathbb C)^*$ in general position such that $(f,a_j)\not\equiv0$\ $(1\leq j\leq q)$, where $\rank_{R\{a_j\}}(f)=k+1$. Then the following assertion holds:\\
(a) $\big|\big|\ \displaystyle\frac{q}{2n-k+2}T_f(r)\leq\sum_{i=1}^q N^{[k]}_{(f_i,a)}(r)+o(T_f(r))+O(\max\limits_{1\leq i\leq q}T_{a_i}(r)),$\\
(b) $\big|\big|\ \displaystyle\frac{q-(n+2k-1)}{n+k+1}T_f(r)\leq\sum_{i=1}^q N^{[k]}_{(f_i,a)}(r)+o(T_f(r))+O(\max\limits_{1\leq i\leq q}T_{a_i}(r)).$}
\vskip0.2cm
Here, by the notation $``|| \ P"$  we mean the assertion $P$ holds for all $r \in [0,\infty)$ outside a Borel subset $E$ of the interval $[0,\infty)$ with $\int_E dr<\infty$. 

Recently, S. D. Quang \cite{Q18} has improved his result to the following.

\vskip0.2cm
\noindent{\bf Theorem B }\cite[Theorem 1.1 (a)]{Q18} {\it Let $f: \mathbb C^m\to\mathbb P^n(\mathbb C)$ be a meromorphic mapping. Let $\{a_j\}_{j=1}^q$\ $(q\geq 2n-k+2)$ be meromorphic mappings of $\mathbb C^m$ into $\mathbb P^n(\mathbb C)^*$ in general position such that $(f,a_j)\not\equiv0$\ $(1\leq j\leq q)$, where $\rank_{R\{a_j\}}(f)=k+1$. Then we have
$$ \ || \ \dfrac {q-(n-k)}{n+2}T_f(r) \le \sum_{i=1}^q N_{(f,a_i)}^{[k]}(r) + o(T_f(r)) + O(\max_{1\le i \le q}T_{a_i}(r)). $$}

\vskip0.2cm
In another direction, in 2015, S. D. Quang \cite{Q15} initially introduced the second main theorem with weighted counting functions. He has generalized partially the above results (the assertion (a) of Theorem A) to the case where each counting function has a different weight. His result is stated as follows.

\vskip0.2cm
\noindent{\bf Theorem C} \cite[Theorem 1.1]{Q15} {\it Let $f :\C^m \to \P^n(\C)$ be a meromorphic mapping. Let $\{a_i\}_{i=1}^q \ (q\ge 2n-k+2)$ be meromorphic mappings of $\C^m$ into $\P^n(\C)^*$ in general position such that $(f,a_i)\not\equiv 0\ (1\le i\le q).$ Assume that $k+1=\rank_{\mathcal R\{a_i\}}(f)$. Let $\lambda_1,\ldots ,\lambda_q$ be $q$ positive number with $(2n-k+2)\max_{1\le i\le q}\lambda_i\le\sum_{i=1}^q\lambda_i$. Then the following assertions hold:
\begin{align*}
|| \ \dfrac {\sum_{i=1}^q\lambda_i}{2n-k+2}T_f(r) \le \sum_{i=1}^q \lambda_iN_{(f,a_i)}^{[k]}(r) + o(T_f(r)) + O(\max_{1\le i \le q}T_{a_i}(r)).
\end{align*}}

\vskip0.2cm
Our first aim in this paper is to give a complete generalization of these above results in this direction.  Namely, we will generalize Theorem B to the following.

\begin{theorem}\label{1.1} Let $f: \mathbb C^m\to\mathbb P^n(\mathbb C)$ be a meromorphic mapping. Let $\{a_j\}_{j=1}^q$\ $(q\geq 2n-k+2)$ be meromorphic mappings of $\mathbb C^m$ into $\mathbb P^n(\mathbb C)^*$ in general position such that $(f,a_j)\not\equiv0$\ $(1\leq j\leq q)$. Assume that $k+1=\rank_{\mathcal R\{a_i\}}(f)$. Let $\lambda_1,...,\lambda_q$ be $q$ positive numbers with $(2n-k+2)\max_{1\le i\le q}\lambda_i\le \sum_{i=1}^q\lambda_i$. Then for every positive number $\eta\in [\max_{1\le i\le q}\lambda_i,\frac{\sum_{i=1}^q\lambda_i}{2n-k+2}]$, we have
$$\big|\big|\ \frac{\sum_{j=1}^q\lambda_j-(n-k)\eta}{n+2}T_f(r)\leq\sum_{j=1}^q\lambda_{j}N^{[k]}_{(f,a_{j})}(r)+o(T_f(r))+O(\max\limits_{1\leq i\leq q}T_{a_i}(r)).$$
\end{theorem}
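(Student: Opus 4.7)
\emph{Plan.} The plan is to deduce Theorem \ref{1.1} from Theorem B via a rational approximation of the weights combined with a hyperplane-duplication trick. Both sides of the target inequality are continuous and positively homogeneous of degree one in $(\lambda_1,\dots,\lambda_q,\eta)$, so by density it suffices to prove the conclusion when all of $\lambda_1,\dots,\lambda_q,\eta$ are positive rationals. Clearing a common denominator scales both sides of the inequality by the same positive constant, so we may further assume these quantities are positive integers with $\eta\ge\max_j\lambda_j$ and $\sum_j\lambda_j\ge(2n-k+2)\eta$. Set $Q:=\sum_{j=1}^q\lambda_j$ and form the duplicated family $\{b_1,\dots,b_Q\}$ of moving hyperplanes in which each $a_j$ is repeated exactly $\lambda_j$ times. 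Since
\begin{equation*}
\sum_{i=1}^Q N^{[k]}_{(f,b_i)}(r)=\sum_{j=1}^q\lambda_j N^{[k]}_{(f,a_j)}(r),
\end{equation*}
the conclusion of Theorem \ref{1.1} is equivalent to
\begin{equation*}
\big|\big|\ \frac{Q-(n-k)\eta}{n+2}T_f(r)\le\sum_{i=1}^Q N^{[k]}_{(f,b_i)}(r)+o(T_f(r))+O\bigl(\max_i T_{a_i}(r)\bigr).
\end{equation*}

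\emph{Key technical step.} Because the $b_i$'s are not in general position, Theorem B does not apply verbatim. The main step is therefore to establish a multiplicity-allowed version of Theorem B: whenever $\{b_1,\dots,b_Q\}$ is obtained from a general-position family $\{a_1,\dots,a_q\}$ by repetitions with multiplicities bounded above by $\eta$, and $Q\ge(2n-k+2)\eta$, the displayed inequality above holds. I would prove this by revisiting Quang's argument in \cite{Q18}: that proof selects, for each point $z\in\C^m$, an $(n+1)$-subset of indices whose corresponding hyperplanes are independent at $z$, bounds the contribution of the remaining indices via a Cartan-type auxiliary polynomial together with the logarithmic derivative lemma, and finally integrates. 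The necessary modification in our setting is that the $(n+1)$-subset must be selected from the \emph{distinct} hyperplanes $a_1,\dots,a_q$, and each distinct hyperplane chosen there contributes at most $\eta$ to the discarded $b_i$'s. Tracking this substitution through Quang's estimate produces the corrected coefficient $(Q-(n-k)\eta)/(n+2)$.

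\emph{Main obstacle.} The principal difficulty is the careful bookkeeping required to verify that every appeal to ``general position'' in \cite{Q18} refers only to the distinct family $\{a_j\}$, and that each discarded distinct index is charged a weight of $\eta$ rather than $1$ when transferred to the duplicated count. The hypothesis $\eta\ge\max_j\lambda_j$ enters precisely at this point, preventing any single repeated hyperplane from being over-represented in the selected $(n+1)$-subset. Once this is in place, the coefficient $(Q-(n-k)\eta)/(n+2)$ emerges automatically and Theorem \ref{1.1} follows. A minor secondary obstacle is confirming that the error terms $o(T_f(r))$ and $O(\max_i T_{a_i}(r))$ are unaffected by the duplication; this is routine, since the logarithmic derivative estimates and characteristic-function bounds depend only on the underlying distinct set of hyperplanes.
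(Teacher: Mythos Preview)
Your proposal contains a genuine gap. The entire argument hinges on your ``multiplicity-allowed version of Theorem B,'' which you do not prove; you only assert that ``tracking this substitution through Quang's estimate produces the corrected coefficient $(Q-(n-k)\eta)/(n+2)$.'' This is a promise to redo the internals of \cite{Q18} with modified bookkeeping, not a proof. Moreover, your description of those internals is already imprecise (you speak of an ``$(n+1)$-subset'' where the relevant selected set has cardinality $n+2$, with $n-k$ indices discarded from a block of $2n-k+2$), which suggests that the required modification has not actually been checked. The duplication trick itself is inert: once you acknowledge that the $b_i$ are not in general position and that any application of Quang's machinery must work with the \emph{distinct} family $\{a_j\}$, you are back to relating the unweighted bound $T_f(r)\le\sum_{j\in J}N^{[k]}_{(f,a_j)}(r)$ (for some $J$ of size $n+2$) to the weighted sum $\sum_j\lambda_jN^{[k]}_{(f,a_j)}(r)$, and the duplication has bought you nothing.

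The paper's proof is much shorter and uses Quang's work only as a black box. It isolates from \cite{Q18} the following lemma: for any $2n-k+2$ moving hyperplanes in general position there is a subset $J$ of size exactly $n+2$ with $||\ T_f(r)\le\sum_{j\in J}N^{[k]}_{(f,a_{i_j})}(r)+o(T_f(r))+O(\max T_{a_i}(r))$. One then fixes a permutation $I=(i_1,\dots,i_q)$ and works on the set $N_I$ of radii $r$ where $N^{[k]}_{(f,a_{i_1})}(r)\le\cdots\le N^{[k]}_{(f,a_{i_q})}(r)$. Apply the lemma to $a_{i_1},\dots,a_{i_{2n-k+2}}$, let $J_1$ be the complementary $(n-k)$-subset, multiply the lemma's inequality by $\sum_j\lambda_j-|J_1|\eta>0$, and use the two hypotheses $\eta\ge\max_j\lambda_j$ and $(2n-k+2)\eta\le\sum_j\lambda_j$ together with the ordering on $N_I$ to absorb everything into $(n+2)\sum_j\lambda_jN^{[k]}_{(f,a_j)}(r)$. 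Since the sets $N_I$ cover $[0,\infty)$, the theorem follows. No rational approximation, no duplication, and no re-entry into the proof of Theorem~B are needed.
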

\noindent
\textit{\underline{Remark:}} 1) Letting $\lambda_1=\cdots =\lambda_q=1$ and $\eta=1$, from Theorem \ref{1.1}, we get Theorem B.

2) Letting $\eta=\frac{\sum_{i=1}^q\lambda_i}{2n-k+2}$, we will get again Theorem C.

\vskip0.2cm
In the last part, we will use the above second main theorem to study algebraic dependence of meromorphic mappings sharing moving hyperplanes regardless of multiplicities. To state our result, we recall the following notation, due to \cite{C,Q15,MR,TDQ} and \cite{Q16}.

 Let $f_t: \mathbb C^m\rightarrow \mathbb P^n(\mathbb C) \ (1\leqslant t \leqslant \lambda)$ be meromorphic mappings with reduced representations
 $f_t:=(f_{t0}:\cdots :f_{tn}).$ Let $a_j: \mathbb C^m\rightarrow \mathbb P^n(\mathbb C)^* \ (1\leqslant j\leqslant q)$ be moving targets located in
general position with reduced representations $a_j:=(a_{j0}:\cdots :a_{jn}).$ Assume that $(f_t,a_j):= \sum_{i=0}^nf_{ti}a_{ji}\ne 0$ for each  
$1\le t\le\lambda ,\ 1\le j\le q$ and $(f_1,a_j)^{-1}\{0\}=\cdots =(f_{\lambda},a_j)^{-1}\{0\}$. Put $A_j=(f_1,a_j)^{-1}\{0\}$ for each $1\leqslant j\leqslant q.$
Assume that every analytic set $A_j$ has the irriducible decomposition as follows $A_j=\cup_{i=1}^{t_j}A_{ji}(1\leqslant t_j\leqslant\infty)$.
Set $A=\cup_{A_{ji}\not\equiv A_{kl}}\{A_{ji} \cap A_{kl}\}$ 
with $1\le i\le t_j,1\le l\le t_k, 1\le j, k \le q$.

Denote by $T[N+1,q]$ the set of all injective maps from $\{1,\cdots N+1\}$ to $\{1,\cdots ,q\}.$ 
For  each $z\in \mathbb C^n\setminus\{\cup_{\beta\in T[N+1,q]}\{z|a_{\beta (1)}(z)\wedge\cdots\wedge a_{\beta (N+1)}(z)=0\} \cup A\cup\cup_{i=1}^{\lambda}I(f_i)\},$ we define $\rho (z)=\sharp\{j|z\in A_j\}$. 
Then $\rho (z)\le N.$ Indeed, suppose that $z\in A_j$ for each $0\le j\le N.$  Then $\sum_{i=0}^{N}f_{1i}(z)\cdot a_{ji}(z)=0$ for each $0\le j\le N.$ 
Since $a_{\beta (1)}(z)\wedge\cdots\wedge a_{\beta (N+1)}(z)\ne 0,$ it implies that $f_{1i}(z)=0$ for each $0\le i\le N.$ This means that 
$z\in I(f_1).$ This is impossible.

For any positive number $r>0,$ define $\rho (r)=\sup\{\rho (z)| |z|\le r\},$ where the supremum is taken over all
$z\in \mathbb C^n\setminus\{\cup_{\beta\in T[N+1,q]}\{z|a_{\beta (1)}(z)\wedge\cdots\wedge a_{\beta (N+1)}(z)=0\} \cup A\cup\cup_{i=1}^{\lambda}I(f_i)\}.$ 
Then $\rho (r)$ is a decreasing function. Let $$ d:=\lim_{r\rightarrow +\infty}\rho (r).$$
Then $d\le N.$  If for each $i \ne j, \ \dim\{A_i\cap A_j\}\le n-2,$ then $d=1.$

In 2001, M. Ru \cite{MR} proved the following theorem. 

\vskip0.2cm
\noindent
\textbf{Theorem B}\ (see \cite[Theorem 1]{MR}){\it Let $f_1,\cdots ,f_{\lambda}: \C^m\rightarrow \P^n(\C)$ $(\lambda \ge 2)$ be nonconstant meromorphic mappings. Let $a_i:\C^m \rightarrow \P^n(\C)^*\ (1\le i\le q)$ be slowly moving hyperplanes in general position. Assume that $(f_i,a_j)\not\equiv 0$ and $(f_1,a_j)^{-1}\{0\}=\cdots =(f_{\lambda},a_j)^{-1}\{0\}$ for each $1\le i\le \lambda, 1\le j\le q$. Denote $A_j=(f_1,a_j)^{-1}(\{0\})$. Let $l$ be a positive integer with $2\le l\le\lambda$. Assume that for each $z\in A_j\ (1\le j\le q)$ and for any $1\le i_1<\cdots <i_{l_j}<q,\ f_{i_1}(z)\wedge\cdots\wedge f_{i_l}(z)=0$.
If $q>\dfrac{d\lambda n^2(2n+1)}{\lambda -l +1},$ then $f_1,\cdots ,f_{\lambda} $ are algebraically 
dependent over $\C,$ i.e.,  $f_1\wedge\cdots\wedge f_{\lambda}\equiv 0$ on $\C^m.$}

\vskip0.2cm 
After that, the result of M. Ru has been improved and extended by P. D. Thoan - P. V. Duc and S. D. Quang in \cite{Q12,Q15,TD,TDQ} when the number of moving hyperplanes is reduced. In 2015, L. N. Quynh \cite{Q16} proposed a new technique, by which she studied the algebraic dependence of meromorphic mappings sharing different family of moving hyperplanes regardless of multiplicities and obtained the results which are much more general and stronger than previous results. Inspired of the technique of Quynh, in this paper we consider the case where the number $l$ in the above theorem may varies dependently on the moving hyperplanes. Namely, we will prove the following.

\begin{theorem}\label{1.2}
Let $f_1,\cdots ,f_{\lambda}: \C^m\rightarrow \P^n(\C)$ $(\lambda \ge 2)$ be nonconstant meromorphic mappings. Let $a_i:\C^m \rightarrow \P^n(\C)^*\ (1\le i\le q)$ be slowly moving hyperplanes in general position. Assume that $(f_i,a_j)\not\equiv 0$ and $(f_1,a_j)^{-1}\{0\}=\cdots =(f_{\lambda},a_j)^{-1}\{0\}$ for each $1\le i\le \lambda, 1\le j\le q$. Denote $A_j=(f_1,a_j)^{-1}(\{0\})$. Let $l_1,\ldots ,l_q$ be $q$ positive integers with $2\le l_i\le\lambda$. Assume that for each $z\in A_j\ (1\le j\le q)$ and for any $1\le i_1<\cdots <i_{l_j}<q,\ f_{i_1}(z)\wedge\cdots\wedge f_{i_{l_j}}(z)=0$.
If $q>\frac{d\lambda k(2n-k+2)-d\lambda (k-1)+\sum_{j=1}^ql_j}{\lambda +1},$ then $f_1,\cdots ,f_{\lambda} $ are algebraically 
dependent over $\C,$ i.e.,  $f_1\wedge\cdots\wedge f_{\lambda}\equiv 0$ on $\C^m.$
\end{theorem}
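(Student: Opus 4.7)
The plan is to argue by contradiction. Suppose $F := f_1 \wedge \cdots \wedge f_\lambda \not\equiv 0$ and regard $F$ as a non-zero meromorphic map into $\P^{\binom{n+1}{\lambda}-1}(\C)$ whose reduced representation consists of the $\lambda \times \lambda$ minors of the matrix $(f_{t,i})_{1 \leq t \leq \lambda,\,0 \leq i \leq n}$. The First Main Theorem for this wedge map then gives $T_F(r) \leq \sum_{t=1}^\lambda T_{f_t}(r) + O(1)$, and in particular $N_F(r) \leq \sum_{t=1}^\lambda T_{f_t}(r) + O(1)$. Write $T(r) := \max_{1 \leq t \leq \lambda} T_{f_t}(r)$ for later use.

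The key local step is a multiplicity lemma for $F$. For every $z \in A_j$ outside the usual thin exceptional set, the dependency hypothesis forces $\dim_\C \mathrm{span}\{f_1(z),\ldots,f_\lambda(z)\} \leq l_j - 1$, so that $\lambda - l_j + 1$ of the vectors $f_t(z)$ are linear combinations of the remaining ones. Tracking orders of vanishing in the $\lambda \times \lambda$ minors yields the pointwise bound $\nu_F(z) \geq (\lambda - l_j + 1)\min_{1 \leq t \leq \lambda}\nu_{(f_t,a_j)}(z)$. Since at most $d$ of the sets $A_j$ pass through any generic point of sufficiently large radius (by the very definition of $d$), integration produces
\begin{equation*}
\sum_{j=1}^{q}(\lambda - l_j + 1)\,N^{[1]}_{(f_1,a_j)}(r) \;\leq\; d\,N_F(r) + o(T(r)) \;\leq\; d\sum_{t=1}^{\lambda} T_{f_t}(r) + o(T(r)).
\end{equation*}

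For each $t$ I would then apply Theorem~\ref{1.1} to $f_t$ with the $q$ weights $\lambda_j := \lambda - l_j + 1 > 0$ and with a judicious choice of the parameter $\eta$ in its admissible range $[\max_j \lambda_j,\sum_j \lambda_j/(2n-k+2)]$; the side condition $(2n-k+2)\max_j \lambda_j \leq \sum_j \lambda_j$ is a consequence of the numerical hypothesis on $q$, using the identity $\sum_j \lambda_j = q(\lambda+1) - \sum_j l_j$ together with $\max_j \lambda_j \leq \lambda - 1$. Summing the $\lambda$ resulting inequalities over $t$, combining with a sharing/truncation estimate bounding $\sum_{t=1}^\lambda N^{[k]}_{(f_t,a_j)}(r)$ in terms of $N^{[1]}_{(f_1,a_j)}(r)$, and inserting the multiplicity estimate from the previous paragraph, I obtain an inequality of the form
\begin{equation*}
C_1 \sum_{t=1}^\lambda T_{f_t}(r) \;\leq\; C_2 \sum_{t=1}^\lambda T_{f_t}(r) + o(T(r)) + O\bigl(\max_i T_{a_i}(r)\bigr),
\end{equation*}
with $C_1 > C_2$ precisely when $q(\lambda+1) > d\lambda[k(2n-k+2) - (k-1)] + \sum_j l_j$, the given hypothesis. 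This yields the required contradiction.

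The principal obstacle is the combinatorial-numerical bookkeeping needed to produce the sharp coefficient $k(2n-k+2)-(k-1)$ on the right-hand side, rather than the strictly larger constant one obtains from any single naive application of Theorem~\ref{1.1} at one of its endpoint values of $\eta$. Achieving the sharp constant requires calibrating the choice of $\eta$ against a refined sharing estimate, which itself must exploit both the truncation at level $k$ and the rank hypothesis $\rank_{\mathcal R\{a_i\}}(f_t) = k+1$ built into Theorem~\ref{1.1}, together with the intersection bound $d$. Once this calibration is carried out, the remaining steps are routine algebraic manipulation and appeals to the First Main Theorem.
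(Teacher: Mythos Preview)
Your outline has a genuine gap: as written it only yields the weaker bound $q(\lambda+1)\le d\lambda k(2n-k+2)+\sum_j l_j$, not the sharp one with the extra $-d\lambda(k-1)$. Carrying out your steps---the wedge estimate $\sum_j(\lambda-l_j+1)N^{[1]}_{(f_1,a_j)}(r)\le d\sum_t T_{f_t}(r)$, Theorem~\ref{1.1} with weights $\lambda_j=\lambda-l_j+1$, and the only available sharing bound $N^{[k]}_{(f_t,a_j)}\le kN^{[1]}_{(f_1,a_j)}$---gives exactly $\sum_j\lambda_j\le d\lambda k(2n-k+2)$ at the optimal endpoint $\eta=\sum_j\lambda_j/(2n-k+2)$, and no other admissible $\eta$ improves this. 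Your final paragraph acknowledges the shortfall but does not supply an argument; the phrase ``refined sharing estimate'' is doing work that no inequality of the form $\sum_t N^{[k]}_{(f_t,a_j)}\le C\,N^{[1]}_{(f_1,a_j)}$ can do, because multiplicities are not shared. (Incidentally, your pointwise multiplicity claim $\nu_F(z)\ge(\lambda-l_j+1)\min_t\nu_{(f_t,a_j)}(z)$ is false as stated---Stoll's theorem gives only $\nu_F\ge\lambda-l_j+1$---though this is harmless since you only use it at level~$1$.)

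The paper's proof recovers the missing $d\lambda(k-1)$ through a genuinely different mechanism that your outline does not contain. Instead of working with the full wedge $F$, one picks an ordered $\lambda$-subset $I=\{j_1,\ldots,j_\lambda\}$ of hyperplane indices with $\det B_I\not\equiv 0$ (where $B_I=\bigl((f_t,a_{j_s})\bigr)_{s,t}$), and estimates the divisor of $\tilde f_1\wedge\cdots\wedge\tilde f_\lambda$ with $\tilde f_t=((f_t,a_{j_1}):\cdots:(f_t,a_{j_\lambda}))$. Factoring common zeros out of the rows of $B_I$ gives a multiplicity estimate that captures \emph{both} the full $\min_t\nu_{(f_t,a_{j})}$ for $j\in I$ \emph{and} the level-$1$ contributions $(\lambda-l_i+1)$ from all $i$. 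After the elementary inequality $\min_t m_t\ge\sum_t\min\{k,m_t\}-(\lambda-1)k$ and a permutation-ordering argument to pass from $\sum_{j\in I}$ to $\frac{\lambda}{q}\sum_{j=1}^q$, one applies Theorem~\ref{1.1} with the shifted weights $\lambda_j=q(\lambda-l_j+1)+d\lambda(k-1)$ rather than your $\lambda-l_j+1$; it is precisely this shift that produces the $-d\lambda(k-1)$ in the final bound.
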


In particular, if let $\lambda=l=2$ and $d=1$ then the condition of the above theorem is fulfilled with $q>2n^2+2n+2$. Therefore, we will get the uniqueness theorem for meromorphic mappings (which may be degenerate) sharing $q>2n^2+2n+2$ slowly moving hyperplanes in genral position without multiplicity. This conclusion had been proved by L. N. Quynh (see \cite[Corollary 1.4]{Q16}), and independently by H.Z. Cao (see \cite[Corollary 3]{C}).
 
\section{Basic notions and auxiliary results from Nevanlinna theory}

\noindent
\textbf{(a)} Basic notions.

Throughout this paper, we use the standart notation on Nevanlina theory due to \cite{C,Q12,Q15} and \cite{Q16}. For a meromorphic mapping $f:\C^m\to\P^n(C)$, we denote by $T_f(r)$ its characteristic funtion. For a diviosr $D$ on $\C^m$, we denote by $N^{[k]}(r,D)$ its counting function trucated to level $k$. We mean by a moving hyperplanes a meromorphic mapping $a:\C^m\to\P^n(\C)^*$. Such $a$ is said to be slow with respect to $f$ if $||\ T_a(r)=o(T_f(r))$.  Let $\varphi$ be a meromorphic funtion on $\C^m$. We denote by $\nu_\varphi$ its divisor and denote by $N_\varphi (r)$ the counting function of its zeros divisor. 

We assume that thoughout this paper, the homogeneous coordinates of $\P^n(\C)$ is chosen so that for each given meromorphic mapping $a=(a_0:\cdots :a_n)$ of $\C^m$ into $\P^n(\C)^*$ then $a_{0}\not\equiv 0$. We set
$$ \tilde a_i=\dfrac{a_i}{a_0}\text{ and }\tilde a=(\tilde a_0:\tilde a_1:\cdots:\tilde a_n).$$
Supposing that $f$ has a reduced representation $f=(f_0:\cdots :f_n).$  We put $(f,a):=\sum_{i=0}^{n}f_ia_{i}$ and $(f,\tilde a):=\sum_{i=0}^{n}f_i\tilde a_{i}.$

Let $\{a_i\}_{i=1}^q$ be $q$ meromorphic mappings of $\C^m$ into $\P^n(\C)^*$  with reduced representations $a_i=(a_{i0}:\cdots :a_{in})\ (1\le i\le q).$ We denote by  $\mathcal R(\{a_i\})$ (for brevity we will write $\mathcal R$ if there is no confusion) the smallest subfield of $\mathcal M$ which contains $\C$ and all ${a_{i_j}}/{a_{i_k}}$ with $a_{i_k}\not\equiv 0.$


\noindent
\textbf{(b)} Theorems for general position.

\begin{theorem}[{The First Main Theorem for general position \cite[p. 326]{St2}}]
Let $f_i: \C^m\rightarrow \P^n(\C),\ 1\le i\le k$ be meromorphic mappings located in general position. Assume that $1\le k \le n.$ Then
$$N_{\mu_{f_1\wedge\cdots\wedge f_{\lambda}}}(r)+m(r,f_1\wedge\cdots\wedge f_{\lambda})\le \sum_{1\le i\le\lambda}T_{f_i}(r)+O(1).$$
\end{theorem}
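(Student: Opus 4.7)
The plan is to realize $F := f_1\wedge\cdots\wedge f_\lambda$ as a single meromorphic mapping from $\C^m$ into the projective space of the exterior power $\Lambda^{\lambda}\C^{n+1}$ (nonzero by the general position hypothesis, using $\lambda\le n$), and then to identify the left-hand side of the stated inequality with the characteristic function $T_F(r)$ up to $O(1)$. This is really the classical First Main Theorem applied to $F$ together with the pointwise Hadamard inequality for exterior products, and no Nevanlinna derivative input is needed.

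First I would fix reduced representations $f_i=(f_{i0}:\cdots:f_{in})$ of each $f_i$, and form the tuple $\widetilde F$ of $\binom{n+1}{\lambda}$ Plücker coordinates, i.e.\ the $\lambda\times\lambda$ minors of the matrix $(f_{ij})$ with $1\le i\le\lambda,\ 0\le j\le n$. Let $h$ be the greatest common divisor of these minors, so that $F:=\widetilde F/h$ is a reduced representation of a meromorphic map into $\P^{N}(\C)$ with $N+1=\binom{n+1}{\lambda}$. By construction, the divisor $\mu_{f_1\wedge\cdots\wedge f_\lambda}$ is (essentially) the zero divisor of $h$, so that $N_{\mu_{f_1\wedge\cdots\wedge f_\lambda}}(r)$ coincides with $N_h(r)$.

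The pointwise estimate I would use is the Hadamard/Cauchy--Schwarz inequality for exterior products:
$$\|f_1\wedge\cdots\wedge f_\lambda(z)\|\ \le\ \prod_{i=1}^{\lambda}\|f_i(z)\|,$$
valid on the complement of the indeterminacy set. Taking $\log$ of both sides and integrating over the sphere $S(r)=\{\|z\|=r\}$ with the standard Nevanlinna surface measure $\sigma_m$ yields
$$\int_{S(r)}\log\|\widetilde F\|\,\sigma_m\ \le\ \sum_{i=1}^{\lambda}\int_{S(r)}\log\|f_i\|\,\sigma_m\ +\ O(1).$$
On the right-hand side each integral is $T_{f_i}(r)+O(1)$ by the Ahlfors--Shimizu/Jensen identity for the characteristic function of a meromorphic map into $\P^n(\C)$. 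On the left-hand side, Jensen's formula for the function $h$ gives
$$\int_{S(r)}\log\|\widetilde F\|\,\sigma_m\ =\ \int_{S(r)}\log|h|\,\sigma_m\ +\ \int_{S(r)}\log\|F\|\,\sigma_m\ =\ N_{\mu_F}(r)\ +\ m(r,F)\ +\ O(1),$$
since $N_h(r)=N_{\mu_F}(r)$ and the integral of $\log\|F\|$ against $\sigma_m$ is exactly the proximity term $m(r,f_1\wedge\cdots\wedge f_\lambda)$ in the notation of Stoll. Combining these two displays yields the claimed bound.

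The main bookkeeping obstacle, rather than any deep analytic step, is to verify that the divisor $\mu_{f_1\wedge\cdots\wedge f_\lambda}$ as defined in \cite{St2} really matches the common-zero divisor of the Plücker coordinates, so that Jensen's formula on $h$ produces the $N_{\mu_F}$ term with the correct multiplicities. Once this identification is in place, the Hadamard inequality and Jensen's formula assemble into the theorem, and the general-position hypothesis is used only to guarantee that $F\not\equiv 0$ so that all integrals above are finite.
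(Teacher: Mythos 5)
The paper does not prove this statement at all: it is quoted verbatim (via \cite{MR}) from Stoll's \emph{On the propagation of dependences} \cite[p.~326]{St2} and used as a black box, so there is no in-paper argument to compare yours against. That said, your proof is the standard one for this classical fact and is essentially correct: writing the Pl\"ucker coordinate vector as $\widetilde F=hF$ with $h$ the greatest common divisor, the Hadamard inequality $\|\mathfrak v_1\wedge\cdots\wedge \mathfrak v_\lambda\|\le\prod_i\|\mathfrak v_i\|$ for the chosen reduced representations, Jensen's formula applied to $h$, and Cartan's identity $T_{f_i}(r)=\int_{S(r)}\log\|\mathfrak v_i\|\,\sigma_m+O(1)$ assemble exactly into the claimed bound, with general position needed only to ensure $\widetilde F\not\equiv 0$. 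The one point you rightly flag as bookkeeping is the only place where care is required: you must check that Stoll's $m(r,f_1\wedge\cdots\wedge f_\lambda)$ agrees (up to $O(1)$) with $\int_{S(r)}\log\|F\|\,\sigma_m$ for the reduced representation $F$, and that $\mu_{f_1\wedge\cdots\wedge f_\lambda}$ is the divisor of $h$, i.e.\ the local minimum of the vanishing orders of the $\lambda\times\lambda$ minors; with those identifications (which match Stoll's definitions) the argument closes, and since $T_F(r)\ge 0$ the inequality holds under either of the common normalizations of the proximity term.
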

Here, by $\mu_{f_1\wedge\cdots\wedge f_{\lambda}}$ we denote the divisor associated to $f_1\wedge\cdots\wedge f_{\lambda}$.

Let $V$ be a complex vector space of dimension $N\ge 1.$ The vectors  $\{v_1,\cdots ,v_k\}$ are said to be in general position if for each selection of integers $1\le i_1<\cdots <i_p\le k$ with $p\le N,$ then  $v_{i_1}\wedge\cdots\wedge v_{i_p}\ne 0$. The vectors  $\{v_1,\cdots ,v_k\}$ are said to be in 
special position if they are not in general position. Take $1\le p \le k.$ Then $\{v_1,\cdots ,v_k\}$ are said to be in 
$p$-special position if for each selection of integers $1\le i_1<\cdots <i_p\le k,$ the vectors $v_{i_1},\cdots, v_{i_p}$ are in special position.

\begin{theorem}[{The Second Main Theorem for general position \cite[Theorem 2.1, p.320]{St2}}]
Let $M$ be a connected complex manifold of dimension $m.$ Let $A$ be a pure $(m-1)$-dimensional analytic subset of $M.$ Let $V$ be a complex vector space of dimension $n+1>1.$ Let $p$ and $k$ be integers with $1\le p\le k\le n+1.$ Let $f_i:M\rightarrow P(V),1\le i\le k,$ be meromorphic mappings. Assume that $f_1,...,f_k$ are in general position. Also assume that $f_1,...,f_k$ are in $p$-special position on $A.$ Then we have
$$\mu_{f_1\wedge\cdots\wedge f_k}\ge (k-p+1)\nu_{A}.$$
\end{theorem}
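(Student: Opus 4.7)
The statement is a divisorial inequality, so the plan is to verify it at a generic smooth point of each irreducible component of $A$ and then conclude globally by density. First I would fix an irreducible component $A_0$ of $A$ and pick a point $a \in A_0$ that is smooth in $A_0$, avoids the remaining components of $A$, and lies outside $I(f_1) \cup \cdots \cup I(f_k)$. Near such an $a$, each $f_i$ admits a holomorphic reduced representation $\tilde f_i = (\tilde f_{i0}, \ldots, \tilde f_{in})$, and I would choose local coordinates $(z_1, \ldots, z_m)$ centered at $a$ in which $A_0 = \{z_1 = 0\}$ locally.

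Next, I would translate the $p$-special position hypothesis into linear algebra: it is equivalent to the $k \times (n+1)$ holomorphic matrix $F(z) = (\tilde f_{ij}(z))$ having rank at most $p - 1$ at every point of $A_0$ near $a$. Since a strictly smaller generic rank only yields a stronger conclusion, I may assume the rank of $F|_{A_0}$ equals $p - 1$ generically and, after shrinking and relabeling, that $\tilde f_1|_{A_0}, \ldots, \tilde f_{p-1}|_{A_0}$ are linearly independent. I would then pick column indices $j_1 < \cdots < j_{p-1}$ for which the $(p-1) \times (p-1)$ submatrix $H(z)$ made from those columns of the first $p - 1$ rows of $F(z)$ satisfies $\det H(a) \neq 0$, and hence is invertible on an ambient neighborhood of $a$.

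Using Cramer's rule, I would define holomorphic coefficients $c_{i,j}$ ($p \le i \le k$, $1 \le j \le p-1$) as the unique solution of $H(z)(c_{i,1}(z), \ldots, c_{i,p-1}(z))^T = (\tilde f_{ij_1}(z), \ldots, \tilde f_{ij_{p-1}}(z))^T$. The rank bound on $A_0$ together with the distinguished-column vanishing by construction force
$$\tilde f_i(z) - \sum_{j=1}^{p-1} c_{i,j}(z)\,\tilde f_j(z) = z_1\,\tilde g_i(z), \qquad p \le i \le k,$$
for some holomorphic $V$-valued $\tilde g_i$. The row replacements $\tilde f_i \mapsto \tilde f_i - \sum_{j=1}^{p-1} c_{i,j}\tilde f_j$ for $i \ge p$ subtract only multiples of $\tilde f_1, \ldots, \tilde f_{p-1}$, which already appear in the wedge product, so every subtracted term vanishes in the alternating product; hence the wedge is unchanged, and substituting $\tilde f_i - \sum_j c_{i,j}\tilde f_j = z_1 \tilde g_i$ for $i \ge p$ gives
$$\tilde f_1 \wedge \cdots \wedge \tilde f_k = z_1^{k-p+1}\bigl(\tilde f_1 \wedge \cdots \wedge \tilde f_{p-1} \wedge \tilde g_p \wedge \cdots \wedge \tilde g_k\bigr).$$
Every Pl\"ucker coordinate of $\tilde f_1 \wedge \cdots \wedge \tilde f_k$ therefore vanishes to order at least $k - p + 1$ along $A_0$ near $a$, which yields $\mu_{f_1 \wedge \cdots \wedge f_k} \ge (k - p + 1)\,\nu_{A_0}$ locally; running the argument over a dense open subset of every component of $A$ produces the global inequality.

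The main obstacle is making the row reduction holomorphic, not merely meromorphic. The $p$-special position gives linear dependencies among the $\tilde f_i|_{A_0}$ only along $A_0$, so a priori the coefficients of these relations need not extend holomorphically off $A_0$. The resolution is the insistence on a $(p-1) \times (p-1)$ minor $\det H$ that is nonzero at $a$ itself, not merely on $A_0$: this makes $H$ invertible on an ambient neighborhood of $a$, so Cramer's rule produces holomorphic $c_{i,j}$ throughout that neighborhood, and the defect vector $\tilde f_i - \sum_j c_{i,j}\tilde f_j$ is then automatically holomorphic and vanishes on $A_0$, hence divisible by $z_1$ exactly as required for the wedge computation to produce the factor $z_1^{k-p+1}$.
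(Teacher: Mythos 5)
The paper does not actually prove this statement: it is quoted verbatim as Theorem 2.1, p.~320 of Stoll's paper on the propagation of dependences \cite{St2}, so there is no in-paper argument to compare yours against. Your proof is correct and self-contained, and it is essentially the classical local argument behind Stoll's theorem. The key steps all check out: $p$-special position at a point is equivalent to the pointwise rank of $\{\tilde f_1(z),\dots,\tilde f_k(z)\}$ being at most $p-1$ (if the rank were at least $p$ one could extract $p$ linearly independent vectors, contradicting special position of that subset); the divisorial inequality only needs to be verified at a generic smooth point of each irreducible component of $A$, since the multiplicity of $\mu_{f_1\wedge\cdots\wedge f_k}$ along a component equals its generic vanishing order there and the indeterminacy loci have codimension at least two; and your device of inverting a $(p-1)\times(p-1)$ minor on an \emph{ambient} neighborhood of $a$ is exactly what makes the row-reduction coefficients $c_{i,j}$ holomorphic, so that the defect vectors $\tilde f_i-\sum_j c_{i,j}\tilde f_j$ vanish on $A_0=\{z_1=0\}$, are divisible by $z_1$, and produce the factor $z_1^{\,k-p+1}$ in the alternating product. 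Two small points are worth making explicit: the general position hypothesis is what guarantees $f_1\wedge\cdots\wedge f_k\not\equiv 0$, so that $\mu_{f_1\wedge\cdots\wedge f_k}$ is a genuine divisor rather than identically infinite; and the point $a$ should be chosen only after fixing the component $A_0$, the generic rank $p'-1\le p-1$ of $F|_{A_0}$, and a specific minor realizing it --- all generic conditions on $A_0$ --- with the case $p'<p$ only strengthening the exponent, as you note.
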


\section{The proof of Theorem \ref{1.1}}

In order to prove Theorem \ref{1.1}, we need the following lemma due to Si Duc Quang \cite{Q18}.
\begin{lemma}[{see \cite[Theorem 1.1, equation (3.9)]{Q18}}]\label{Quang}
{\it Let $f: \mathbb C^m\to\mathbb P^n(\mathbb C)$ be a meromorphic mapping. Let $\{a_j\}_{j=1}^{2n-k+2}$ be meromorphic mappings of $\mathbb C^m$ into $\mathbb P^n(\mathbb C)^*$ in general position such that $(f,a_j)\not\equiv 0$\ $(1\leq j\leq 2n-k+2)$, where $\rank_{R\{a_j\}}(f)=k+1$. Then there exists a subset $J\subset\{1,...,2n-k+2\}$ with $|J|= n+2$ satisfying
$$ \ || \ T_f(r) \le \sum_{j\in J}N_{(f,a_j)}^{[k]}(r) + o(T_f(r)) + O(\max_{1\le j \le 2n-k+2}T_{a_j}(r)).$$}
\end{lemma}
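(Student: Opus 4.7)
The plan is to bootstrap from Theorem B (cited in the introduction): at the critical value $q=2n-k+2$, the coefficient $(q-(n-k))/(n+2)$ on the left of its conclusion is exactly $1$, so Theorem B already yields
\begin{equation*}
T_f(r)\le\sum_{j=1}^{2n-k+2}N^{[k]}_{(f,a_j)}(r)+o(T_f(r))+O(\max_j T_{a_j}(r)).
\end{equation*}
The remaining task is to show that the right-hand sum can be restricted to some $(n+2)$-subset $J$, i.e.\ that $n-k$ of the $2n-k+2$ counting functions are redundant. Note one cannot simply apply Theorem B to an $(n+2)$-subfamily, since Theorem B requires at least $2n-k+2$ hyperplanes; a genuinely pointwise selection is needed.

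I would unpack the proximity-function inequality that drives Theorem B and show that at each generic point $z$, only the $n+2$ hyperplanes ``nearest'' to $f(z)$ --- those making $|(f,\tilde a_j)(z)|/(\|f(z)\|\,\|a_j(z)\|)$ smallest --- contribute meaningfully. The general-position hypothesis lets one express any $\tilde a_j$ as an $\mathcal R$-linear combination of $n+1$ others, while the rank condition $\rank_{\mathcal R}(f)=k+1$ forces the image of $f$ into a $k$-dimensional $\mathcal R$-algebraic locus; together these allow the discarded $n-k$ ``farthest'' hyperplanes to be absorbed into an $\mathcal R$-small error. Since the map $z\mapsto J(z)$ identifying the nearest $(n+2)$-subset takes only $\binom{2n-k+2}{n+2}<\infty$ values, partition $\mathbb{C}^m$ (outside a thin analytic set) into Borel strata $E_J$ indexed by $(n+2)$-subsets. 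Integrating the pointwise inequality on each stratum and combining with Jensen's formula, the first main theorem, and the moving-target logarithmic-derivative lemma, gives
\begin{equation*}
T_f(r)\le\sum_{|J|=n+2}\sum_{j\in J}\int_{S(r)\cap E_J}\log^+\frac{\|f\|\,\|a_j\|}{|(f,\tilde a_j)|}\,d\sigma_r+o(T_f(r))+O(\max_j T_{a_j}(r)).
\end{equation*}
Since there are only finitely many $(n+2)$-subsets, pigeonhole extracts a single $J_0$ whose stratum $E_{J_0}$ carries a positive proportion of $T_f(r)$ for all radii $r$ outside an exceptional Borel set; this gives the required inequality with $J=J_0$.

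The principal obstacle is the pointwise $(n+2)$-subset estimate underlying the second step: the classical Cartan-type general-position argument alone only discards $n-1$ terms and would yield an $(n+1)$-subset. Gaining the extra index requires a moving-target Wronskian construction of order $k+1$ --- matching the rank hypothesis --- whose vanishing divisor along each zero set $\{(f,a_j)=0\}$ is at least the total multiplicity minus $k$. This Wronskian is precisely what simultaneously licenses the truncation to level $k$ and upgrades ``$n-1$ redundant'' to ``$n-k$ redundant.'' Carrying this construction out while tracking the $\mathcal R$-small error uniformly across the strata $E_J$ is the heart of the proof.
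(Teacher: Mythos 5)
First, a point of reference: the paper does not actually prove this lemma. It is imported verbatim from Quang's preprint \cite{Q18} (equation (3.9) there), and the authors' only original contribution to the statement is the remark that Quang's conclusion $|J|\le n+2$ can be replaced by $|J|=n+2$ since enlarging $J$ only adds nonnegative counting functions. So there is no in-paper argument to match; you are attempting to reprove an external result, and your attempt has genuine gaps.

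The decisive gap is the final pigeonhole step. After stratifying $\C^m$ by the ``nearest $(n+2)$-subset'' and summing over strata, you extract a single $J_0$ whose stratum ``carries a positive proportion of $T_f(r)$.'' If that proportion is $1/C$ with $C=\binom{2n-k+2}{n+2}$, what you obtain is $\frac{1}{C}T_f(r)\le\sum_{j\in J_0}N^{[k]}_{(f,a_j)}(r)+\cdots$, not the lemma's inequality, which has coefficient exactly $1$ in front of $T_f(r)$. That coefficient is the entire point: it is what makes Theorem \ref{1.1} (and hence the quantitative bound on $q$ in Theorem \ref{1.2}) work. There is no obvious way to upgrade ``a positive proportion'' to ``all of $T_f(r)$'' by averaging over radii, and the correct selection of $J$ in \cite{Q18} is not made by partitioning the sphere $S(r)$ at all --- it comes out of the algebraic construction of auxiliary linear combinations adapted to the rank-$(k+1)$ hypothesis. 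Relatedly, your opening move is logically backwards: Theorem B is \emph{deduced from} this lemma in \cite{Q18}, so bootstrapping from Theorem B at $q=2n-k+2$ (where the coefficient is indeed $(2n-k+2-(n-k))/(n+2)=1$) presupposes the harder statement. Finally, your last paragraph concedes that the order-$(k+1)$ moving-target Wronskian construction --- the step that simultaneously justifies truncation at level $k$ and cuts the index set down to $n+2$ --- is ``the heart of the proof'' and is not carried out. Since that construction \emph{is} the proof of (3.9), the proposal is a plausible outline of where the difficulty lies rather than a proof.
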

Actually, in the proof of Theorem B, firstly S. D. Quang proved this lemma, but he did not separate this lemma from the proof of Theorem B (see equation (3.9) in \cite{Q18}). We also note that, Quang proved that the existing subset $J$ in the above lemma satisfies $|J|\le n+2$. Therefore, it of course will be hold for some $J$ with $|J|=n+2$. 
\begin{proof}[Proof of Theorem \ref{1.1}]

We denote by $\mathcal I$ the set of all permutations of $q-$tuple $(1,\ldots,q)$. For each element $I=(i_1,\ldots,i_q)\in\mathcal I$, we set
$$N_I=\{r\in\R^+;N^{[k]}_{(f,a_{i_1})}(r)\le\cdots\le N^{[k]}_{(f,a_{i_q})}(r)\}.$$

Fix a permutation $I=(i_1,\ldots,i_q)\in\mathcal I$. Applying Lemma \ref{Quang}, there exists a subset $J\in\{1,...,2n-k+2\}$ with $|J|= n+2$ such that
\begin{align*}
||\ T_f(r)&\le\sum_{j\in J}N^{[k]}_{(f,a_{i_j})}+o(T_f(r))+O(\max_{1\le i \le q}T_{a_i}(r)),
\end{align*}
Put $J_1=\{1,...,2n-k+2\}\setminus J$ then 
$$|J_1|=(2n-k+2)-|J|= n-k.$$
By the assumption of the theorem, we have $\sum_{j\in J_1}\lambda_{i_j}-|J_1|\eta\leq 0$ and $\sum_{j=1}^q\lambda_j-|J_1|\eta>0$. Hence 
\begin{equation}\label{Eq2}
\begin{aligned}
\bigl |\bigl |\ (\sum_{j=1}^q\lambda_j-|J_1|\eta)T_f(r)&\leq (\sum_{j\not\in J_1}\lambda_{i_j})\sum_{l\in J} N^{[k]}_{(f,a_{i_l})}(r)+(\sum_{j\in J_1}\lambda_{i_j}-|J_1|\eta)\sum_{l\in J} N^{[k]}_{(f,a_{i_l})}(r)\\
&+o(T_f(r))+O(\max\limits_{1\leq i\leq q}T_{a_i}(r))\\
&\leq|J|\Big(\sum_{l\in J}\frac{\sum\limits_{j\not\in J_1}\lambda_{i_j}}{|J|} N^{[k]}_{(f,a_{i_l})}(r)\Big)+o(T_f(r))+O(\max\limits_{1\leq i\leq q}T_{a_i}(r))\\
&=|J|\left(\sum_{l\in J}\lambda_{i_l}N^{[k]}_{(f,a_{i_l})}(r)+\sum_{l\in J}\Big(\frac{\sum\limits_{j\not\in J_1}\lambda_{i_j}}{|J|}-\lambda_{i_l}\Big) N^{[k]}_{(f,a_{i_l})}(r)\right)\\
&+o(T_f(r))+O(\max\limits_{1\leq i\leq q}T_{a_i}(r)).
\end{aligned}
\end{equation}
Also by the assumption, for each $l\in J$ we have
\begin{equation*}
\begin{aligned}
\sum_{j\not\in J_1}\lambda_{i_j}-|J|\lambda_{i_l}&=\sum_{j=1}^q\lambda_j-\sum_{j\in J_1}\lambda_{i_j}-|J|\lambda_{i_l}\\
&\geq\sum_{j=1}^q\lambda_j-(|J_1|+|J|)\eta\\
&=\sum_{j=1}^q\lambda_j-(2n-k+2)\eta\geq0.
\end{aligned}
\end{equation*}
Then, from (\ref{Eq2}), for all $r\in N_I$, we have
\begin{equation*}
\begin{aligned}
\bigl |\bigl | \left(\sum_{j=1}^q\lambda_j-|J_1|\eta\right)T_f(r)&\leq|J|\left(\sum_{l\in J}\lambda_{i_l}N^{[k]}_{(f,a_{i_l})}(r)+\sum_{l=2n-k+3}^q\lambda_{i_l} N^{[k]}_{(f,a_{i_{2n-k+2}})}(r)\right)\\
&+o(T_f(r))+O(\max\limits_{1\leq i\leq q}T_{a_i}(r))\\
&\leq|J|\sum_{j=1}^q\lambda_{j}N^{[k]}_{(f,a_{j})}(r)+o(T_f(r))+O(\max\limits_{1\leq i\leq q}T_{a_i}(r)).\\
\end{aligned}
\end{equation*}
Hence, for $r\in N_I,$ we have
 \begin{equation*}
\begin{aligned}
\bigl |\bigl | \frac{\sum_{j=1}^q\lambda_j-(n-k)\eta}{n+2}T_f(r)\leq\sum_{j=1}^q\lambda_{j}N^{[k]}_{(f,a_{j})}(r)+o(T_f(r))+O(\max\limits_{1\leq i\leq q}T_{a_i}(r)).
\end{aligned}
\end{equation*}  
The theorem is proved.
\end{proof}

\section{The proof of Theorem \ref{1.2}}

In order to prove Theorem \ref{1.2}, we need the following lemma due to Quynh \cite[Claim 3.3]{Q16} (see also \cite[Claim 3.1]{TD}).
\begin{lemma}\label{4.1}
Let $h_i:\mathbb C^m\to\mathbb P^n(\mathbb C)\ (1\leq i\leq p\leq  n+1)$ be meromorphic mappings with reduced representations $h_i:=(h_{i0}:\cdots:h_{in})$. Let $a_i:\mathbb C^m\to\mathbb P^n(\mathbb C)\ (1\leq i\leq p\leq  n+1)$ be moving hyperplanes with  reduced representations $a_i:=(a_{i0}:\cdots:a_{in})$. Put $\tilde{h_i}:=((h_i,a_1):\cdots:(h_i,a_{n=1})).$ Assume that $a_1,\cdots,a_{n+1}$ are located in general position such that $(h_i,a_j)\not\equiv0\ (1\leq i\leq p, 1\leq j\leq n+1)$. Let $S$ be a pure $(n-1)-$ dimensional analytic subset of $\mathbb C^m$ such that $S\not\subset(a_1\wedge\cdots\wedge a_{n+1})^{-1}\{0\}.$ Then $h_1\wedge\cdots\wedge h_{p}\equiv0$ on $S$ if and only if $\tilde{h_1}\wedge\cdots\wedge \tilde{h_p}\equiv0$ on $S$.
\end{lemma}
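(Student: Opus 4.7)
The plan is to recognize $\tilde h_i$ as the image of the reduced representation $h_i$ under a linear transformation determined by the hyperplanes $a_1,\dots,a_{n+1}$, so that the two wedges differ only by multiplication by an invertible matrix off a proper analytic subset of $S$. Concretely, I would form the $(n+1)\times(n+1)$ matrix $B(z)=\bigl(a_{jk}(z)\bigr)_{1\le j\le n+1,\,0\le k\le n}$ and the $p\times(n+1)$ matrix $H(z)=\bigl(h_{ik}(z)\bigr)_{1\le i\le p,\,0\le k\le n}$. The defining identity $(h_i,a_j)=\sum_{k=0}^n h_{ik}a_{jk}$ packages into the single matrix relation $\widetilde H(z)=H(z)\,B(z)^T$, where $\widetilde H$ is the $p\times(n+1)$ matrix whose rows are $\tilde h_i$.

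Up to sign, $\det B(z)$ coincides with $(a_1\wedge\cdots\wedge a_{n+1})(z)$. The general position hypothesis together with $S\not\subset (a_1\wedge\cdots\wedge a_{n+1})^{-1}\{0\}$ implies that $T:=S\cap(a_1\wedge\cdots\wedge a_{n+1})^{-1}\{0\}$ is a proper analytic subset of $S$; since $S$ is pure $(n-1)$-dimensional, $T$ has strictly smaller dimension in each irreducible component of $S$, so $S\setminus T$ is open and dense in $S$.

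Right-multiplication by the invertible matrix $B(z)^T$ preserves rank, so for every $z\in S\setminus T$ one has $\rank H(z)=\rank\widetilde H(z)$. The wedge $h_1\wedge\cdots\wedge h_p$ vanishes at $z$ if and only if $\rank H(z)<p$, and the analogous equivalence holds for $\tilde h_1\wedge\cdots\wedge\tilde h_p$. Consequently the two wedges vanish at exactly the same points of $S\setminus T$. Since the components of both wedges are holomorphic functions and $S\setminus T$ is dense in $S$, identical vanishing of either wedge on $S\setminus T$ upgrades to identical vanishing on all of $S$ by continuity of the components on the closed set $S$, which yields the biconditional in both directions.

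The argument is essentially linear algebra coupled with a single dimension count, so I do not foresee any genuine obstacle. The only point that needs a moment's attention is the density of $S\setminus T$ in $S$, which is a direct consequence of the purity and dimension hypotheses on $S$ together with the assumption $S\not\subset (a_1\wedge\cdots\wedge a_{n+1})^{-1}\{0\}$; everything else reduces to the rank identity coming from the relation $\widetilde H=HB^T$.
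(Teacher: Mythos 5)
The paper does not actually prove this lemma: it is imported as a black box from \cite[Claim 3.3]{Q16} (see also \cite[Claim 3.1]{TD}), so there is no in-paper argument to compare against. Your proof is correct and is the standard argument behind that claim: the relation $\widetilde H=HB^{T}$ with $\det B=\pm(a_1\wedge\cdots\wedge a_{n+1})$, the rank equality at points of $S$ where $B$ is invertible, and the density-plus-continuity upgrade from $S\setminus T$ to $S$. The one point to watch is that your assertion that $T$ has strictly smaller dimension in \emph{each} irreducible component of $S$ implicitly requires that no component of $S$ lies inside $(a_1\wedge\cdots\wedge a_{n+1})^{-1}\{0\}$, which is slightly stronger than the stated hypothesis $S\not\subset(a_1\wedge\cdots\wedge a_{n+1})^{-1}\{0\}$ when $S$ is reducible (and the ``if'' direction genuinely can fail on a component where $B$ is everywhere singular, since there $\rank HB^{T}$ may drop to $p-1$); in the paper's application $S$ is an irreducible set $\Gamma$, so the two conditions coincide and your argument is complete as written.
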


\begin{proof}[Proof of Theorem \ref{1.2}]
It suffices to prove the theorem in the case of $\lambda\leq n+1.$ Suppose that $f_1\wedge\cdots\wedge f_{\lambda}\not\equiv0.$

We set $\mathcal A=\cup_{i=1}^qA_i$, and denote by $\mathcal S$ the singular part of $\mathcal A$. For an ordered set of $\lambda$ distinc indices $I=\{j_1,\cdots,j_{\lambda}\}\subset\{1,\ldots ,q\}$, we put $I^c=\{1,\cdots,q\}\setminus I$ and 
$$B_I=\left(\begin{array}{cccc}
(f_1,a_{j_1})&\cdots&(f_{\lambda},a_{j_1})\\
(f_1,a_{j_2})&\cdots&(f_{\lambda},a_{j_2})\\
\vdots&\vdots&\vdots\\
(f_1,a_{j_{\lambda}})&\cdots&(f_{\lambda},a_{j_{\lambda}})
\end{array}\right).$$
 Take a positive number $r_0>1$ such that $\rho (r)=d$ for all $r>r_0$. We now prove the following claim.
\begin{claim}
If $B_I$ is nondegenerate, i.e., $\det B_I\not\equiv0$ then 
$$
d\sum_{i\in I}(\min_{1\leq v\leq \lambda}\nu_{(f_v,a_i)}(z)-\min\{1,\nu_{(f_1,a_i)}\}(z))+\sum_{i=1}^{q}(\lambda-l_i+1)\min\{1,\nu_{(f_1,a_i)}(z)\}\leq d\nu_{\tilde{f_1}\wedge\cdots\wedge\tilde{f_{\lambda}}}(z)
$$
for all $z\in\C^m\setminus(\mathcal S\cup\bigcup_{i=1}^{\lambda}I(f_i)\cup(a_{i_1}\wedge\cdots\wedge a_{j_{\lambda}}))^{-1}(0)$ with $||z||>r_0$, where $\tilde{f_i}:=((f_i,a_{j_1}):\cdots:(f_i,a_{j_{\lambda}}))$.
\end{claim}
Indeed, fix a point $z_0\in \C^m\setminus(\mathcal S\cup\bigcup_{i=1}^{\lambda}I(f_i)\cup(a_{i_1}\wedge\cdots\wedge a_{j_{\lambda}}))^{-1}(0)$ with $||z_0||>r_0$. It suffices for us to prove the inequality of the claim for $z_0\in\mathcal A$. We may assume that:
\begin{itemize}
\item there are $t$ indices in $I$, for intance they are $j_0,...,j_t$ such that $z_0\in\bigcup_{i=1}^tA_{j_i}$ and $z_0\not\in A_{j_i}$ for all $t<i\le\lambda$ ($t$ may be $0$),
\item there are $s$ indices in $I^c$, for intance they are $k_0,...,k_s$ such that $z_0\in\bigcup_{i=1}^sA_{k_i}$ and $z_0\not\in A_k$ for all $k\in I^c\setminus\{k_1,\ldots ,k_s\}$ ($s$ may be $0$), where $s+t\le d$.
\end{itemize}

Let $\Gamma$ be an irriducible analytic subset of $\mathcal A$ containing $z_0.$ Suppose that $U$ is an open neighbourhood of $z_0$ in $\mathbb C^m$ such that $U\cap (\mathcal A\setminus \Gamma)=\emptyset$. Choose holomorphic function $h_i$ on a neighbourhood $U'\subset U$ of $z_0$ such that  
$\nu_{h_i}=\min_{1\le v\le \lambda}\{\nu_{(f_v,a_{j_i})}\}$ if $z\in \Gamma$ and $\nu_{h}=0$ if $z\not\in \Gamma$ for each $1\le i\le t$. Then $(f_v,a_{j_i})=a_{vi}h_i \ (1\le i\le t),$ where $a_{vi}$ are holomorphic functions. Hence, we have
$$ \det B_I=h_1\cdots h_t\cdot\det
 \left(\begin{array}{cccc}
a_{11}&\cdots&a_{\lambda 1}\\
\vdots&\vdots&\vdots\\
a_{1t}&\cdots&a_{\lambda t}\\
(f_1,a_{j_{t+1}})&\cdots&(f_{\lambda},a_{j_{t+1}})\\
\vdots&\vdots&\vdots\\
(f_1,a_{j_{\lambda}})&\cdots&(f_{\lambda},a_{j_{\lambda}})
\end{array}\right).$$
This implies that
\begin{align}\label{4.2}
d\nu_{\tilde{f_1}\wedge\cdots\wedge\tilde{f_{\lambda}}}(z_0)=d\nu_{\det B_I}(z_0)\ge\sum_{i=1}^td\nu_{h_i}(z_0)+d\nu_{g_{t+1}\wedge\cdots\wedge g_{\lambda}}(z_0),
\end{align}
where $g_i=((f_1,a_{j_i}),\cdots,(f_{\lambda},a_{j_i}))$ for every $1\le i\le\lambda$. Let $l=\min\{l_{j_1},..,l_{j_t},k_1,...,k_s\}$. By the assumption, on the analytic set $\Gamma$, we have
$$ \rank\{g_{t+1},\ldots, g_{\lambda}\}=\rank\{g_1,\ldots ,g_\lambda\}=\rank\{\tilde f_1,...,\tilde f_\lambda\}\le l. $$
 By using The Second Main Theorem for general position \cite[Theorem 2.1, p.320]{St2}, we have 
$$\nu_{g_{t+1}\wedge\cdots\wedge g_{\lambda}}(z_0)\ge \max\{\lambda -t-l+1,0\}.$$
Combining this inequality with (\ref{4.2}) we get
\begin{align*}
d\nu_{\tilde{f_1}\wedge\cdots\wedge\tilde{f_{\lambda}}}(z_0)&\ge\sum_{i=1}^td\min_{1\le v\le \lambda}\nu_{(f_v,a_{j_i})}(z_0)+d\max\{\lambda -t-l+1,0\}\\
&\ge\sum_{i=1}^td(\min_{1\le v\le \lambda}\nu_{(f_v,a_{j_i})}(z_0)-\min\{1,\nu_{(f_1,a_{j_i})}(z_0)\})+d(\lambda-l+1)\\
&\ge\sum_{i=1}^\lambda d(\min_{1\le v\le \lambda}\nu_{(f_v,a_{j_i})}(z_0)-\min\{1,\nu_{(f_1,a_{j_i})}(z_0)\})\\
&\ \ \ \ +\sum_{i=1}^q(\lambda-l+1)\min\{1,\nu_{f_1,a_i}(z_0)\}\\
&\ge\sum_{i=1}^\lambda d(\min_{1\le v\le \lambda}\nu_{(f_v,a_{j_i})}(z_0)-\min\{1,\nu_{(f_1,a_{j_i})}(z_0)\})\\
&\ \ \ \ +\sum_{i=1}^q(\lambda-l_i+1)\min\{1,\nu_{f_1,a_i}(z_0)\}
\end{align*}
Hence, the claim is proved.

We now continue to prove Theorem \ref{1.2}. For each $1\leq j\leq q,$ we set 
$$ N_j(r)=\sum_{i=1}^qN^{[k]}(r,\nu_{(f_i,a_j)})-((\lambda-1)k-1)N^{[1]}(r, \nu_{(f_1,a_j)}).$$
For each permutation $J=(j_1,\ldots,j_q)$ of $(1,\ldots,q)$, we set 
$$T_J=\{r\in[1,+\infty):N_{j_1}(r)\geq\cdots\geq N_{j_q}(r)\}.$$ It is clear that $\bigcup_JT_J=[1,+\infty)$. Thefore, there exists a permutation, for instance it is $J_0=(1,\cdots,q)$, such that $\int_{T_{J_0}}dr=+\infty.$ Then we have 
$$ N_{1}(r)\geq\cdots\geq N_{q}(r),$$
for all $r\in T_{J_0}.$ By the assumption that $f_1\wedge\cdots\wedge f_{\lambda}\not\equiv0,$ there exists ordered set of indices $I=\{i_1,\cdots,i_{\lambda}\}$ with $1= i_1< \cdots < i_{\lambda}\leq n$ such that $\det B_I\not\equiv 0$. We note that 
$$N_1(r)=N_{i_1}(r)\geq\cdots\geq N_{i_{\lambda}}(r)\geq N_{n+1}(r),$$ for each $r\in T_{J_0}$.

We see that $\min_{1\leq i\leq \lambda}m_i\geq\sum_{i=1}^{\lambda}\min\{k,m_i\}-(\lambda-1)k$ for every non-negative integers $m_1,\cdots,m_{\lambda}.$ Then by Claim \ref{4.2}, we have 
\begin{align*}
d\sum_{i\in I}(\sum_{v=1}^{\lambda}\min\{\nu_{(f_v,a_i)}(z),k\}&-((\lambda-1)k-1)\min\{1,\nu_{(f_1,a_i)}\}(z))\\
&+\sum_{i=1}^{q}(\lambda-l_i+1)\min\{1,\nu_{(f_1,a_i)}(z)\}\leq d\nu_{\tilde{f_1}\wedge\cdots\wedge\tilde{f_{\lambda}}}(z)
\end{align*}
for all $z\in\C^m\setminus(\mathcal S\cup\bigcup_{i=1}^{\lambda}I(f_i)\cup(a_{i_1}\wedge\cdots\wedge a_{j_{\lambda}}))^{-1}(0)$ with $||z||>r_0$.
Integrating both sides of the above inequality, for every $r>r_0$ we have 
\begin{align*}
\sum_{i\in I}d\big(\sum_{v=1}^{\lambda}N^{[k]}(r,\nu_{(f_v,a_i)})-((\lambda-1)k&+1)N^{[1]}(r,\nu_{(f_1,a_i)})\big)+ \sum_{i=1}^{q}(\lambda -l_i+1)N^{[1]}(r,\nu_{(f_1,a_i)})\\&
 \le dN_{\tilde f_1\wedge\cdots\wedge \tilde f_{\lambda}}(r)\leq d\sum_{v=1}^{\lambda}T_{f_v}(r)+o(\max_{1\leq v\leq\lambda}\{T_{f_v}(r)\}).
\end{align*}
We set $T(r)=\sum_{v=1}^{\lambda}T(r,f_v).$ Then, for all $r\in J_0, r>r_0,$ we have 
\begin{align*}
\bigg|\bigg|\ dT(r)&\geq d\sum_{j=1}^{\lambda}N_{i_j}(r)+\sum_{i=1}^{q}(\lambda -l_i+1)N^{[1]}(r,\nu_{(f_1,a_i)})+o(\max_{1\leq i\leq\lambda}\{T_{f_i}(r)\})\\
&\geq \frac{d\lambda}{q}\sum_{j=1}^{q}N_{j}(r)+\sum_{i=1}^{q}(\lambda -l_i+1)N^{[1]}(r,\nu_{(f_1,a_i)})+o(\max_{1\leq i\leq\lambda}\{T_{f_i}(r)\})\\
&=\sum_{i=1}^{q}\left(\lambda -l_i+1-\frac{d\lambda((\lambda-1)k+1)}{q}\right)N^{[1]}(r,\nu_{(f_1,a_i)})+\frac{d\lambda}{q}\sum_{j=1}^{q}\sum_{i=1}^{\lambda}N^{[k]}(r,\nu_{(f_i,a_j)})\\
&\ \ \ \ \ \ \ \ \ \ \ \ \ \ \ \ \ \ \ \ \ \ \ \ \ \ \ \ \ \ \ \ \ \ \ \ \ \ \ \ \ \ \ \ \ \ \ \ \ \ \ \ \ \ \ \ \ \ \ \ \ \ \ \ \ \ +o(\max_{1\leq i\leq\lambda}\{T_{f_i}(r)\})\\
&\geq\sum_{j=1}^{\lambda}\sum_{i=1}^{q}\left(\frac{d\lambda}q+\frac{\lambda -l_i+1}{\lambda k}-\frac{d((\lambda-1)k+1)}{qk}\right)N^{[k]}(r,\nu_{(f_j,a_i)})+o(\max_{1\leq i\leq\lambda}\{T_{f_i}(r)\})\\
&\geq\sum_{j=1}^{\lambda}\sum_{i=1}^{q}\frac{q(\lambda-l_i+1)+d\lambda(k-1)}{q\lambda k}N^{[k]}(r,\nu_{(f_j,a_i)})+o(\max_{1\leq i\leq\lambda}\{T_{f_i}(r)\}).\\
\end{align*} 
Therefore, for each $r\in T_{J_0}, r>r_0$, we get
\begin{align}\label{4.4}
\bigg|\bigg|\ {dq\lambda k}T(r)\geq\sum_{j=1}^{\lambda}\sum_{i=1}^{q}({q(\lambda-l_i+1)+d\lambda(k-1)})N^{[k]}(r,\nu_{(f_j,a_i)})+o(\max_{1\leq i\leq\lambda}\{T_{f_i}(r)\}).
\end{align} 

For each $1\le j\le q$, put $\lambda_j=q(\lambda-l_j+1)+d\lambda(k-1)$. We see that 
\begin{align*}
\frac{\sum_{i=1}^q\lambda_i}{\lambda_j}=\frac{\sum_{i=1}^qq(\lambda-l_i+1)+dq\lambda(k-1)}{q(\lambda-l_j+1)+d\lambda(k-1)}\ge\frac{q^2+dq\lambda(k-1)}{q(\lambda-1)+d\lambda(k-1)}\ge 2n-k+2.
\end{align*}  
Hence, applying the Theorem \ref{1.1},  for a real number $\eta\in [\max_{1\le i\le q}\lambda_i,\frac{\sum_{i=1}^q\lambda_i}{2n-k+2}]$, which will be chosen later, we have
\begin{equation*}
\begin{aligned}
\bigg|\bigg|\ \ &\frac{\sum_{j=1}^q{{q(\lambda-l_j+1)}}+dq\lambda(k-1)-(n-k)\eta}{n+2}T_{f_t}(r)\\
&\ \ \ \ \ \ \ \ \ \ \ \  \ \ \ \ \ \ \ \ \ \ \ \ \ \ \ \ \ \ \ \ \ \ \  \leq \sum_{j=1}^q({q(\lambda-l_j+1)+d\lambda(k-1)})N^{[k]}_{(f_t,a_{j})}(r)+o(\max\limits_{1\leq i\leq \lambda}T_{f_i}(r)).\\
\end{aligned}
\end{equation*}  This inequality and (\ref{4.4}) imply that 
\begin{equation*}
\begin{aligned}
\bigg|\bigg|\ \sum_{t=1}^{\lambda}\frac{q^2(\lambda+1)+dq\lambda(k-1)-q\sum_{j=1}^ql_j-(n-k)\eta}{n+2}T_{f_i}(r)&\leq dq\lambda kT(r)+o(\max\limits_{1\leq i\leq q}T_{a_i}(r)).
\end{aligned}
\end{equation*} 
Letting $r\to+\infty, r\in T_{J_0}$, we get  
$$ q^2(\lambda+1)+dq\lambda(k-1)-q\sum_{j=1}^ql_j-(n-k)\eta\le (n+2)dq\lambda k. $$
This implies that
\begin{align}\label{4.5}
q\le\frac{1}{\lambda+1}\left ((n+2)d\lambda k-d\lambda(k-1)+\sum_{j=1}^ql_j+(n-k)\frac{\eta}{q} \right )
\end{align}
Now we choose 
$$\eta =\frac{\sum_{j=1}^q\lambda_j}{2n-k+2}=\frac{q(q(\lambda +1)+d\lambda(k-1)-\sum_{j=1}^ql_j)}{2n-k+2}.$$
By simple computation, from (\ref{4.5}) we easily get that
$$ \frac{(n+2)q}{2n-k+2}\le \frac{1}{\lambda+1}\left ((n+2)d\lambda k-\frac{n+2}{2n-k+2}(d\lambda(k-1)-\sum_{j=1}^ql_j) \right ),$$
$$ \text{i.e., }q\le \frac{1}{\lambda+1}\left (d\lambda k(2n-k+2)-d\lambda(k-1)+\sum_{j=1}^ql_j\right).$$
This is a contradiction. 

Hence, the family $\{f_1,...,f_{\lambda}\}$ is algebraically dependent, i.e., $f_1\wedge\cdots\wedge f_{\lambda}\equiv0$. 
\end{proof}

\noindent{\bf Acknowledgements.} The authors would like to thank the referee for his/her helpful comments on the first version of this paper. We would also like to thank professor Si Duc Quang for his kindly giving us the very recent preprint \cite{Q18} and giving us many heplful suggestion to revise and improve the first version of this paper to the current version. 

\end{document}